\title{}
\begin{document}
\author{\normalsize\textsc{Denis Kojevnikov}\textsuperscript{\textasteriskcentered}}
\address{\textsuperscript{\textasteriskcentered}\normalfont{Corresponding author. Department of Econometrics and Operations Research, Tilburg University, The Netherlands. Email: \href{mailto:D.Kojevnikov@tilburguniversity.edu}{D.Kojevnikov@tilburguniversity.edu}.}}
\author{\normalsize\textsc{Kyungchul Song}\textsuperscript{\S}}
\address{\noindent\textsuperscript{\S}\normalfont{Vancouver School of Economics, University of British Columbia, Canada.}}

\date{\today}

\begin{center}
	\Large \textsc{A Berry-Esseen Bound for Vector-valued Martingales}
\end{center}

\begin{abstract}
	{\footnotesize This note provides a conditional Berry-Esseen bound for the sum of a martingale difference sequence $\{X_i\}_{i=1}^n$ in $\R^d$, $d\ge 1$, adapted to a filtration $\{\F_i\}_{i=1}^n$. We approximate the conditional distribution of $S=\sum_{i=1}^n X_i$ given a sub-$\sigma$-field $\F_0\subset \F_1$ by that of a mean zero normal random vector having the same conditional variance given $\F_0$ as the vector $S$. Assuming that the conditional variances $\E[X_iX_i^{\top}\mid \F_{i-1}]$, $i\ge 1$, are $\F_0$-measurable and non-singular, and the third conditional moments of $\|X_i\|$, $i\ge 1$, given $\F_0$ are uniformly bounded, we present a simple bound on the conditional Kolmogorov distance between $S$ and its approximation given $\F_0$ which is of order $O_{a.s.}([\ln(ed)]^{5/4}n^{-1/4})$.
	}

	\bigskip

	{\footnotesize \noindent \textsc{Keywords.} Berry-Esseen bound; Gaussian approximation; Martingale-difference sequence; Vector-valued martingale}
\end{abstract}

{
	\let\MakeUppercase\relax
	\maketitle
}

\section{Introduction}
Let $(\Omega,\F,\PM)$ be a probability space and let $\{X_i\}_{i=1}^n$ be an $\R^d$-valued martingale difference sequence with $d\ge 1$ adapted to a filtration $\{\F_i\}_{i=1}^n$, i.e., each $X_i$ is $\F_i$-measurable and $\E[X_{i+1}\mid \F_i]=0$ a.s. In addition, suppose that we are given a sub-$\sigma$-field $\F_0\subset\F_1$, not necessarily trivial, such that $\mathsf{E}[X_1\mid \mathcal{F}_0]=0$ a.s. Throughout the paper we assume that each $X_i$ has finite conditional third moment given $\F_0$, i.e., $\E[\norm{X_i}_{\infty}^3\mid \F_0]<\infty$ a.s., where $\norm{\csdot}_{\infty}$ denotes the maximum norm on $\R^d$.

The goal of this paper is to establish a uniform distributional approximation of the random vector $S\eqdef\sum_{i=1}^n X_i$ conditionally on $\F_0$ by a suitably chosen Gaussian analog. Specifically, we consider a random vector $T$ whose conditional distribution given $\F_0$ is $\ND{0}{V}$, where the covariance matrix $V$ is a version of $\E[SS^\top\mid \F_0]$. Namely, the conditional characteristic function of $T$ is given by
\[
	\E[e^{it^{\top}T}\mid \F_0]=\exp\left(-\frac{1}{2}t^\top V t\right) \qtext{a.s.}
\]
for all $t\in \R^d$. Then we establish a bound on the conditional Kolmogorov distance between $S$ and $T$ given $\F_0$.

Let $\mathcal{A}$ denote the collection of sets of the form $\prod_{j=1}^d(-\infty,r_j]$ with $r\equiv [r_1,\ldots,r_d]^{\top}\in\R^d$. Also, let $\mu_X^{\G}$ denote the regular conditional distribution of a vector $X$ given a sub-$\sigma$-field $\G\subset\F$.\footnote{
	The regular conditional distribution $\mu_Z^{\G}$ of a random vector $Z\in\R^d$ given $\G\subset\F$ satisfies: (i) $\forall B\in\mathcal{B}(\R^d)$, $\mu_Z^{\G}(\csdot,B)$ is a version of $\PR{Z\in B\mid \G}(\csdot)$, and (ii) $\forall\omega\in \Omega$, $\mu_Z^{\G}(\omega,\csdot)$ is a distribution on $\R^d$. In particular, condition (ii) implies that $\DK{X,Y\mid \G}$ defined in \eqref{def:ckd} is $\G$-measurable.
}
The conditional Kolmogorov distance between random vectors $X$ and $Y$ in $\R^d$ given a sub-$\sigma$-field $\G\subset \F$ is defined by
\begin{align}
\label{def:ckd}
	\DK{X,Y\mid \G}(\omega)\eqdef \sup_{A\in\mathcal{A}}\,\abs{\mu_X^{\G}(\omega,A)-\mu_Y^{\G}(\omega,A)}.
\end{align}

Assuming that the conditional variances $\E[X_iX_i^{\top}\mid \F_{i-1}]$, $i\ge 1$, are $\F_0$-measurable, and the third conditional moments of $\|X_1\|_{\infty},\|X_2\|_{\infty},\ldots$ given $\F_0$ are uniformly bounded, we present a simple bound on $\DK{S,T\mid \F_0}$ of order $O_{a.s.}([\ln(ed)]^{5/4}n^{-1/4})$. In addition, we require that the minimum eigenvalues of $\E[X_iX_i^{\top}\mid \F_0]$, $i\ge 1$, are bounded away from zero, that is, the random vectors $X_1,X_2,\ldots$ are assumed to have non-degenerate conditional distributions given $\F_0$.

For scalar-valued martingale difference sequences with constant conditional variances and finite third moments, \cite{Grams:72} showed that $\DK{S,T}=O(n^{-1/4})$. If, in addition, $X_i$, $i\ge 1$, are uniformly bounded, \cite{Bolthausen:82} established a bound of order $O([\ln{n}]n^{-1/2})$. Furthermore, he provided examples of martingale difference sequences for which both estimates are sharp. The classical rate of $O(n^{-1/2})$ is nevertheless possible under stronger conditions on the conditional moments of $X_i$'s. See, for example, \cite{Kiryanova/Rotar:92}, \cite{Renz:96}, and \cite{Wu:20} for recent developments.

In multidimensional settings, extensive research has been focused on sequences of independent random vectors. \cite{Chernozhukov/Chetverikov/Kato:13} established a Berry-Esseen bound of order $O([\ln(dn)]^{7/8}n^{-1/8})$ for maxima of sums of such vectors. This result was subsequently improved in \cite{Chernozhukov/Chetverikov/Kato:17} and \cite{Chernozhukov/Chetverikov/Kato/Koike:19}. Recently, \cite{Lopes:20} provided a nearly $1/\sqrt{n}$ bound on $\DK{S,T}$ for i.i.d.\ sub-Gaussian random vectors, and \cite{Kuchibhotla/Rinaldo:20} improved that result by showing an $O([\ln(en)]^{1/2}n^{-1/2})$ rate of convergence under the weakest possible conditions. This paper relies on the smoothing inequality presented in the latter work.

\section{Preliminary Results}

Let $Z_1,\ldots,Z_n$ be i.i.d.\ standard normal random vectors in $\R^d$ independent of $\mathcal{F}_n$. For $1\le i\le n$, let $Y_i=\Sigma_i^{1/2}Z_i$, where $\Sigma_i$ is a version of $\E[X_iX_i^{\top}\mid \F_0]$. It is clear that the conditional distribution of $T$ given $\F_0$ is the same as that of $\sum_{i=1}^n Y_i$, and so we associate $T$ with the latter sum. In addition, by the properties of conditional distributions,
\begin{equation}
\label{eq:Q_approx}
	\DK{S,T\mid \F_0}=\sup_{r\in \Q^d}\abs{\PR{S\in A_r\mid \F_0}-\PR{T\in A_r\mid \F_0}} \qtext{a.s.},
\end{equation}
where $A_r\eqdef \prod_{j=1}^d(-\infty, r_j]$ with $r\in \R^d$ is a generic set in $\mathcal{A}$, and $\Q$ is the set of rational numbers. 

Consider a random vector $\eta\sim \ND{0}{I_d}$, independent of $Z_1,\ldots,Z_n$ and $\F_n$. We approximate the probabilities on the right-hand side of \eqref{eq:Q_approx} with conditional expectations of the following smooth function:
\[
	\varphi_r(x,\epsilon)\eqdef \PR{x+\epsilon \eta\in A_r},
\]
evaluated at $(S,\varepsilon)$ and $(T,\varepsilon)$, respectively, where $\varepsilon$ is a positive, $\F_0$-measurable random variable which will be determined later. Note that for a fixed $\epsilon>0$, the function $x\mapsto\varphi_r(x,\epsilon)$ is infinitely differentiable, and by Lemma 2.3 in \cite{Fang/Koike:21} for each $x,r\in\R^d$ and $s\ge 1$ we have
\begin{equation}
\label{eq:sd_bound}
	\sum_{j_1,\ldots,j_s=1}^d \abs{\frac{\partial}{\partial x_{j_1}}\cdots\frac{\partial}{\partial x_{j_s}} \varphi_r(x,\epsilon)} \le C_s \epsilon^{-s}[\lnp{d}]^{s/2},
\end{equation}
where $C_s>0$ is a constant depending only on $s$ and $\lnp{x}\equiv 1\vee \ln{x}$. In addition, for an $\F_0$-measurable random variable $\varepsilon$,
\begin{align*}
	&\E[\varphi_r(S,\varepsilon)-\varphi_r(T,\varepsilon)\mid \F_0] \\
	&\qquad=\PR{S+\varepsilon \eta\in A_r\mid \F_0}-\PR{T+\varepsilon \eta\in A_r\mid \F_0} \qtext{a.s.}
\end{align*}

The following lemma establishes an upper bound on the approximation error due to the use of $\varphi_r$. We define
\begin{equation}
\label{eq:min_std}
	\ubar{\sigma}^2\eqdef\min_{1\le j\le d}[V]_{jj}.
\end{equation}
\begin{lemma}
\label{lemma:dk_bound}
Suppose that $\ubar{\sigma}>0$ a.s. There exists a universal constant $C>0$ such that for any $\epsilon>0$,
\[
	\DK{S,T\mid \F_0}\le \sup_{r\in\Q^d}\abs{\E[\varphi_r(S,\epsilon)-\varphi_r(T,\epsilon)\mid \F_0]}+\frac{C\epsilon\lnp{d}}{\ubar{\sigma}} \qtext{a.s.}
\]
\end{lemma}
\begin{proof}
Let $\gamma_{\epsilon}$ denote a mean zero Gaussian measure on $\R^d$ with covariance matrix $\epsilon^2 I_d$. By Lemma 1 in \cite{Kuchibhotla/Rinaldo:20}, for any $r\in\R^d$ and $\epsilon>0$,
\begin{align*}
	&\abs{\left(\mu_S^{\F_0}-\mu_T^{\F_0}\right)(\omega, A_r)} \\
	&\qquad\le \sup_{r\in \Q^d}\abs{\left(\mu_S^{\F_0}\ast\gamma_{\epsilon}-\mu_T^{\F_0}\ast\gamma_{\epsilon}\right)(\omega, A_r)}+\frac{C\epsilon\lnp{d}}{\ubar{\sigma}(\omega)}
\end{align*}
for some universal constant $C>0$. On the other hand, for almost all $\omega\in\Omega$,
\begin{align*}
	&\mathsf{P}(S+\varepsilon \eta\in A_r\mid \F_0)(\omega)-\mathsf{P}(T+\varepsilon \eta\in A_r\mid \F_0)(\omega) \\
	&\qquad=\int 1_{A_r}(x+\epsilon z)\left(\mu_S^{\F_0}\otimes\mu_{\eta}-\mu_T^{\F_0}\otimes \mu_{\eta}\right)(\omega,d (x\times z)) \\
	&\qquad=\left(\mu_S^{\F_0}\ast\gamma_{\epsilon}-\mu_T^{\F_0}\ast\gamma_{\epsilon}\right)(\omega,A_r). \qedhere
\end{align*}
\end{proof}

The next result implies the regularity of the conditional Kolmogorov distance in the sense that for suitable random vectors $X$, $Y$, and $Z$, $\DK{X+Z,Y+Z\mid \F_0}\le\DK{X,Y\mid \F_0}$ a.s.\ when $Z$ is conditionally independent of $X$ and $Y$ given $\F_0$.

\begin{lemma}
\label{lemma:smoothing}
Let $X$, $Y$, and $Z$ be random vectors in $\R^d$ defined on $(\Omega,\F,\PM)$ such that $Z$ is conditionally independent of $X$ and $Y$ given $\F_0$. Then for any $A\in \mathcal{A}$,
\[
	\abs{\PR{X+Z\in A\mid \F_0}-\PR{Y+Z\in A\mid \F_0}}\le \DK{X,Y\mid\F_0} \qtext{a.s.}
\]
\end{lemma}
\begin{proof}
Let $\G\eqdef\F_0\vee \sigma(Z)$. Then
\begin{align*}
	&\PR{X+Z\in A\mid \F_0}-\PR{Y+Z\in A\mid \F_0}\\
	&\qquad=\E[\PR{X+Z\in A\mid \G}-\PR{Y+Z\in A\mid \G}\mid \F_0] \qtext{a.s.},
\end{align*}
and for almost all $\omega\in\Omega$,
\begin{align*}
	&\abs{\PR{X+Z\in A\mid \G}(\omega)-\PR{Y+Z\in A\mid \G}(\omega)}\\
	&\qquad=\abs{\int 1_A(x+Z(\omega)) \left(\mu_X^{\F_0}-\mu_Y^{\F_0}\right)(\omega,dx)}\le \DK{X,Y\mid \F_0}(\omega). \qedhere
\end{align*}

\end{proof}

Finally, we give an upper bound on the moments of the maximum norm of a Gaussian random vector.

\begin{lemma}
\label{lemma:norm_bound}
Let $Y\equiv[Y_1,\ldots,Y_d]^{\top}$ be a zero-mean Gaussian vector in $\R^d$, $d\ge 1$, with $\sigma_j^2\eqdef \E Y_j^2>0$ for all $1\le j\le d$, and let $\bar{\sigma}\eqdef \max_{1\le j\le d}\sigma_j$. Then for any $s\ge 2$,
\begin{equation}
\label{eq:norm_bound}
	\E\norm{Y}_\infty^s\le C_s\bar{\sigma}^s\left(\lnp{d}\right)^{s/2},
\end{equation}
where $C_s>0$ is a constant depending only on $s$.
\end{lemma}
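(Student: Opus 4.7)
The plan is to use the layer-cake representation together with a union bound and the standard Gaussian tail estimate, splitting the resulting integral at a natural threshold determined by $\lnp{p}$.

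First, I would write
\[
	\E\norm{Y}_\infty^r = r\int_0^\infty t^{r-1}\PR{\norm{Y}_\infty>t}\,dt
\]
and then use the union bound together with the Gaussian tail inequality $\PR{|Z|>t}\le 2\exp(-t^2/(2\sigma^2))$ for $Z\sim\mathcal{N}(0,\sigma^2)$ to obtain
\[
	\PR{\norm{Y}_\infty>t}\le\sum_{j=1}^p\PR{|Y_j|>t}\le 2p\exp\!\left(-\frac{t^2}{2\overline{\sigma}^2}\right).
\]
Since the tighter bound $\min\{1,2p\exp(-t^2/(2\overline{\sigma}^2))\}$ is useful, I would choose the threshold $t_0\eqdef\overline{\sigma}\sqrt{2\lnp{p}}$ so that $2p\exp(-t_0^2/(2\overline{\sigma}^2))\le 2$ in the relevant regime and the Gaussian tail becomes summable beyond $t_0$.

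Next, I would split the integral at $t_0$. For $t\le t_0$ I use $\PR{\norm{Y}_\infty>t}\le 1$, giving a contribution of $t_0^r \le (2\overline{\sigma}^2\lnp{p})^{r/2}$, which already has the desired form. For $t>t_0$, changing variables $u=t/\overline{\sigma}$ reduces the tail piece to
\[
	2pr\overline{\sigma}^r\int_{\sqrt{2\lnp{p}}}^\infty u^{r-1}e^{-u^2/2}\,du.
\]
A single integration by parts (or, equivalently, a standard tail bound for incomplete Gaussian moments) yields, for $r\ge 2$ and $a\ge 1$,
\[
	\int_{a}^\infty u^{r-1}e^{-u^2/2}\,du\le C_r(a^{r-2}+1)e^{-a^2/2},
\]
which, applied at $a=\sqrt{2\lnp{p}}$ (so $e^{-a^2/2}\le 1/p$ when $p\ge e$, and $e^{-a^2/2}\le 1$ otherwise), bounds the tail piece by $C_r\overline{\sigma}^r(\lnp{p})^{(r-2)/2}$. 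Adding the two pieces gives the claimed inequality since $(\lnp{p})^{(r-2)/2}\le(\lnp{p})^{r/2}$ as $\lnp{p}\ge 1$.

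The main obstacle is purely bookkeeping: ensuring that the constants depend only on $r$ uniformly over $p\ge 1$, including the small-$p$ regime where $\lnp{p}=1$ and the bound must reduce to the absolute Gaussian moment $\E|Y_j|^r\le C_r\overline{\sigma}^r$. The use of $\lnp{\csdot}$ rather than $\ln$, together with the $+1$ term in the integration-by-parts bound, handles this seamlessly without splitting into cases on $p$.
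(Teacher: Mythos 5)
Your proof is correct, but it takes a genuinely different route from the paper. You argue via the layer-cake formula $\E\norm{Y}_\infty^r=r\int_0^\infty t^{r-1}\PR{\norm{Y}_\infty>t}\,dt$, a union bound with the Gaussian tail estimate, and a split of the integral at $t_0=\overline{\sigma}\sqrt{2\lnp{p}}$; the incomplete-Gaussian-moment bound you invoke is valid (substitute $v=u^2/2$ and use $\int_x^\infty v^s e^{-v}\,dv\le C_s(x^s+1)e^{-x}$), and the choice of $t_0$ makes the tail contribution of lower order $(\lnp{p})^{(r-2)/2}$, so the constants are uniform in $p$, including $p\in\{1,2\}$ where $\lnp{p}=1$. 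The paper instead proceeds through Jensen's inequality applied to a convex function $f$ chosen so that $f(\norm{Y}_\infty^r)$ is essentially the sub-Gaussian exponential moment: it bounds $\E\exp\bigl(\norm{Y}_\infty^2/(4\overline{\sigma}^2)\bigr)\le\sqrt{2}\,p$ by summing the exponential moments of the coordinates, and then inverts $f$, which delivers the bound in one stroke with an explicit constant of the form $(2\overline{\sigma})^r\bigl[\ln(\sqrt{2}e^{c_r}p)\bigr]^{r/2}$ (with a separate, simpler choice of $f$ at $r=2$). The trade-off: the paper's argument is shorter and keeps the constant transparent, while yours is more elementary (no convexity trick, no exponential-moment computation) and extends verbatim to any vector with sub-Gaussian coordinate tails, since only the tail inequality, not Gaussianity, is used.
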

\begin{proof}
Let $f:[a,\infty)\to \R$, $a\ge 0$, be a strictly increasing convex function. Using Jensen's inequality, we have
\[
	\E\norm{Y}_\infty^s\le \E[a\vee \norm{Y}_\infty^s]\le f^{-1}\left(\E[f(a\vee \norm{Y}_\infty^s)]\right).
\]

First, for $s>2$ consider $f(x)=\exp{\left(c_s(x/a)^{2/s}\right)}$ with $a>0$ and $c_s\eqdef s/2-1$, which is convex on $[a,\infty)$. Letting $a=\left(2\sqrt{c_s}\bar{\sigma}\right)^s$, we find that
\begin{align*}
	\E[f(a\vee \norm{Y}_\infty^s)]&=\E \exp\left(c_s\left(1\vee \frac{\norm{Y}_{\infty}^2}{a^{2/s}}\right)\right)\le e^{c_s}\E \exp\left(\frac{\norm{Y}_{\infty}^2}{4\bar{\sigma}^2}\right) \\
	&\le e^{c_s}\sum_{j=1}^p\E\exp\left(\frac{\abs{Y_j}^2}{4\bar{\sigma}^2}\right)=e^{c_s}\sum_{j=1}^d \sqrt{\frac{2\bar{\sigma}^2}{2\bar{\sigma}^2-\sigma_j^2}}\le \sqrt{2}e^{c_s}d,
\end{align*}
and, therefore,
\begin{equation}
\label{eq:norm_bound2}
	\E\norm{Y}_\infty^s\le \left[\ln\left(\sqrt{2}e^{c_s}d \right)\right]^{s/2}(2\bar{\sigma})^s \le  C_s\bar{\sigma}^s\left(\lnp{d}\right)^{s/2}
\end{equation}
for some $C_s>0$ depending only on $s$. For $s=2$ we take $f(x)=\exp(x/(2\bar{\sigma})^2)$ and $a=0$ which similarly yield \eqref{eq:norm_bound2}.
\end{proof}

\section{Main Results}

In this section we derive a Berry-Esseen bound for the random vector $S$. Let $\norm{\csdot}_{e,p}$ denote the element-wise $p$-norm in $\R^{k \times l}$, i.e., for a $k\times l$ matrix $A$, $\norm{A}_{e,p}=\norm{\vecm(A)}_{p}$, $p\in [1,\infty]$, and let
\[
	\ubar{\lambda}^2\eqdef\min_{1\le i\le n}\lambda_{\text{min}}(\Sigma_i),
\]
where $\lambda_{\text{min}}(A)$ is the smallest eigenvalue of $A$.

\begin{lemma}
\label{lemma:approx_cb3}
Suppose that $\ubar{\lambda}>0$ a.s. There exists a universal constant $C>0$ such that for any $\epsilon>0$,
\begin{align*}
	&\sup_{r\in \Q}\abs{\E[\varphi_r(S,\epsilon)-\varphi_r(T,\epsilon)\mid \F_0]} \\
	&\qquad\le C[\lnp{d}]^{3/2}\ubar{\lambda}(\gamma_1+\gamma_3)\epsilon^{-1}+C[\lnp{d}]\beta\ln\left(1+n\ubar{\lambda}^2\epsilon^{-2}\right) \qtext{a.s.},
\end{align*}
where
\begin{align*}
	\gamma_s&\eqdef\max_{1\le i\le n}\left(\E[\|X_i\|_{\infty}^s\mid \F_0]+\bar{\sigma}_i^s[\lnp{d}]^{s/2}\right)/\ubar{\lambda}^s, \quad s>0,\\
	\beta&\eqdef\max_{1\le i\le n}\E\left[\normin{\E[X_iX_i^{\top}\mid \F_{i-1}]-\Sigma_i}_{e,\infty}\mid \F_0\right]/\ubar{\lambda}^2,
\end{align*}
and $\bar{\sigma}_i^2\eqdef\max_{1\le j\le d}[\Sigma_i]_{jj}$.
\end{lemma}

\begin{proof}
First, letting
\[
	U_i\eqdef \sum_{j=1}^{i-1} X_j+\sum_{j=i+1}^n Y_j,
\]
$1\le i\le n$, we write
\begin{align}
\label{eq:interp}
	\begin{aligned}
		&\abs{\E[\varphi_r(S,\epsilon)-\varphi_r(T,\epsilon)\mid \F_0]} \\
		&\qquad\le \sum_{i=1}^{n} \abs{\E[\varphi_r(U_i+X_i,\epsilon)-\varphi_r(U_i+Y_i,\epsilon)\mid \F_0]}  \qtext{a.s.}
	\end{aligned}
\end{align}
Consider the right hand side of the preceding display. For each $1\le i\le n$, let $S_i=S_{i-1}+X_i$ and $T_i=T_{i-1}+Y_i$ with $S_0\equiv 0$ and $T_0\equiv 0$. We also define
\[
	\varepsilon_i\eqdef \left(\epsilon^2+(n-i)\ubar{\lambda}^2\right)^{1/2} \qtext{and}\quad V_i\eqdef\left(\sum_{k=i+1}^n \Sigma_k-(n-i)\ubar{\lambda}^2 I_d\right)^{1/2}.
\]
Since $Y_1,\ldots, Y_n$ are conditionally independent of $\F_n$ given $\F_0$, by Lemma \ref{lemma:smoothing} we have
\begin{align*}
	&\abs{\E[\varphi_r(U_i+X_i,\epsilon)-\varphi_r(U_i+Y_i,\epsilon)\mid \F_0]} \\
	&\qquad=\abs{\PR{S_{i-1}+X_i+\varepsilon_i \eta\in A_{r - V_i\eta'}\mid \F_0}-\PR{S_{i-1}+Y_i+\varepsilon_i \eta\in A_{r - V_i\eta'}\mid \F_0}} \\
	&\qquad\le \sup_{r\in\Q^d}\abs{\E[\varphi_r(S_{i-1}+X_i,\varepsilon_i)-\varphi_r(S_{i-1}+Y_i,\varepsilon_i)\mid \F_0]} \qtext{a.s.}
\end{align*}
for each $1\le i< n$, where $\eta'$ is an independent copy of $\eta$.

\begin{claim}\label{lemma:taylor_bounds}
There exists a universal constant $C>0$ such that for each $r\in\R$,
\begin{align}
	\label{eq:taylor_bound1}
	\begin{aligned}
		&\E[\varphi_r(S_{i-1}+X_i,\varepsilon_i)-\varphi_r(S_{i-1}+Y_i,\varepsilon_i)\mid \F_0] \\
		&\qquad\le C\varepsilon_i^{-2}[\lnp{d}]\ubar{\lambda}^2\beta+ C\varepsilon_i^{-3}[\lnp{d}]^{3/2}\ubar{\lambda}^3\gamma_3,
	\end{aligned}
\end{align}
if $1\le i< n$, and
\begin{align}
	\label{eq:taylor_bound2}
	\begin{aligned}
		&\E[\varphi_r(S_{n-1}+X_n,\epsilon)-\varphi_r(S_{n-1}+Y_n,\epsilon)\mid \F_0] \\
		&\qquad\le C\epsilon^{-1}[\lnp{d}]^{1/2}\ubar{\lambda}\gamma_1.
	\end{aligned}
\end{align}
\end{claim}

\begin{subproof}
We show \eqref{eq:taylor_bound1}. The inequality \eqref{eq:taylor_bound2} follows using similar arguments. Let $h_{1i}(\tau)\eqdef \varphi_r(S_{i-1}+\tau X_i,\varepsilon_i)$ and $h_{2i}(\tau)\eqdef \varphi_r(S_{i-1}+\tau Y_i,\varepsilon_i)$. Using Taylor's expansion up to terms of the third order,
\begin{align*}
	h_{1i}(1)-h_{2i}(1)=\sum_{j=1}^2 \frac{1}{j!}\left(h_{1i}^{(j)}(0)-h_{2i}^{(j)}(0)\right)+\frac{1}{3!}\left(h_{1i}^{(3)}(\tau_1)-h_{2i}^{(3)}(\tau_2)\right),
\end{align*}
where $\abs{\tau_1},\abs{\tau_2}\le 1$. First, it is clear that
\[
	\E[\E[h_{1i}'(0)-h_{2i}'(0)\mid \F_{i-1}]\mid \F_0]=0 \qtext{a.s.},
\]
and, using \eqref{eq:sd_bound},
\begin{align*}
	&\abs{\E[h_{1i}''(0)-h_{2i}''(0)\mid \F_0]}\le \E[\abs{\E[h_{1i}''(0)-h_{2i}''(0)\mid \F_{i-1}]}\mid \F_0] \\
	&\qquad \le C'\varepsilon_i^{-2}[\lnp{d}]\E[\normin{\E[X_iX_i^{\top}\mid \F_{i-1}]-\Sigma_i}_{e,\infty}\mid \F_0] \qtext{a.s.},
\end{align*}
where $C'>0$ is a universal constant. Finally, using \eqref{eq:sd_bound} and Lemma \ref{lemma:norm_bound},
\begin{align*}
	\absin{\E[h_{1i}^{(3)}(\tau_1)-h_{2i}^{(3)}(\tau_2)\mid \F_0]}&\le \E[\absin{h_{1i}^{(3)}(\tau_1)}\mid \F_0]+\E[\absin{h_{2i}^{(3)}(\tau_2)}\mid \F_0] \\
	&\le C''\varepsilon_i^{-3}[\lnp{d}]^{3/2}\left(\E[\|X_i\|_{\infty}^3\mid \F_0]+\bar{\sigma}_i^3[\lnp{d}]^{3/2}\right) \qtext{a.s.},
\end{align*}
where $C''>0$ is a universal constant.
\end{subproof}

Using Claim \ref{lemma:taylor_bounds}, the result follows from \eqref{eq:interp} by noticing that
\[
	\sum_{i=1}^{n-1}\varepsilon_i^{-2}\le \int_0^1\frac{n-1}{(\epsilon^2+(n-1)\ubar{\lambda}^2x)}\, dx\le \frac{1}{\ubar{\lambda}^2}\ln\left(1+\frac{n\ubar{\lambda}^2}{\epsilon^2}\right)
\]
and
\[
	\sum_{i=1}^{n-1}\varepsilon_i^{-3}\le \int_0^1\frac{n-1}{(\epsilon^2+(n-1)\ubar{\lambda}^2x)^{3/2}}\, dx\le \frac{2}{\ubar{\lambda}^2\epsilon}. \qedhere
\]
\end{proof}

\medskip

\begin{theorem}
\label{thm:BE_bound}
Suppose that $\ubar{\lambda}>0$ a.s. There exists a universal constant $C>0$ such that
\begin{align}
\label{eq:BE_bound}
	\begin{aligned}
		\DK{S,T\mid \F_0}&\le C[\lnp{d}]^{5/4}\left(\gamma\ubar{\lambda}/\ubar{\sigma}\right)^{1/2} \\
		&\quad+C[\lnp{d}]\beta\ln\left(1+\frac{n\ubar{\lambda}/\ubar{\sigma}}{[\lnp{d}]^{1/2}\gamma}\right) \qtext{a.s.},
	\end{aligned}
\end{align}
where $\gamma\equiv \gamma_1+\gamma_3$, and $\ubar{\sigma}$ is defined in \eqref{eq:min_std}.
\end{theorem}

\begin{proof}
Using Lemmas \ref{lemma:dk_bound} and \ref{lemma:approx_cb3}, we find that for any $\epsilon>0$,
\begin{align*}
	\DK{S,T\mid \F_0}&\le \frac{C[\lnp{d}]^{3/2}\ubar{\lambda}\gamma}{\epsilon} \\
	&\quad+C[\lnp{d}]\beta\ln\left(1+\frac{n\ubar{\lambda}^2}{\epsilon^2}\right)+\frac{C\epsilon \lnp{d}}{\ubar{\sigma}} \qtext{a.s.},
\end{align*}
where $C$ is a universal constant. Since this inequality holds for all $\epsilon>0$, it also holds for random $\epsilon$ a.s.\ on the event $\{\epsilon\in (0,\infty)\}$. Consequently, the result follows by choosing $\epsilon=[\lnp{d}]^{1/4}(\ubar{\lambda}\ubar{\sigma}\gamma)^{1/2}$ and noticing that $\ubar{\sigma}\ge \ubar{\lambda}$.
\end{proof}

\begin{remark*}
(1) If the conditional variances $\E[X_iX_i^{\top}\mid \F_{i-1}]$, $1<i\le n$, are $\F_0$-measurable, then $\beta=0$ a.s., and the bound in Theorem \ref{thm:BE_bound} becomes
\begin{align*}
	\DK{S,T\mid \F_0}&\le C[\lnp{d}]^{5/4}\left(\gamma\ubar{\lambda}/\ubar{\sigma}\right)^{1/2} \\
	&\le  C \gamma^{1/2} [\lnp{d}]^{5/4} n^{-1/4} \qtext{a.s.}
\end{align*}
because 
\[
	\ubar{\sigma}^2/n \ge \min_{1\le i\le n}\min_{1\le j\le d}[\Sigma_i]_{jj} \ge \ubar{\lambda}^2 \quad \qtext{a.s.}
\]
In this case, when $\sup_{i\ge 1}\E[\norm{X_i}_{\infty}^3\mid \F_0]<\infty$ a.s.,  and the smallest eigenvalues of $\Sigma_1,\Sigma_2,\ldots$ are uniformly bounded away from zero, the bound is of order $O_{a.s.}([\lnp{d}]^{5/4} n^{-1/4})$.

\noindent(2) Noticing that $\ln(1+x)\le \sqrt{x}$ for $x\ge 0$, the second term on the right hand side of \eqref{eq:BE_bound} can be further bounded by
\[
	\frac{[\lnp{d}]^{3/4}\sqrt{n}\beta}{\sqrt{\gamma \ubar{\sigma}/\ubar{\lambda}}}.
\]
The latter quantity is similar to the corresponding term of the bound given in Theorem 2 in  Section 9.3 of \cite{Chow/Teicher:97:Prob} for scalar-valued martingales. The corresponding first term is, however, of order $O(n^{-1/8})$ under the conditions of part (1).

\vspace{0.5em}

\noindent(3) The bound in \eqref{eq:BE_bound} trivially applies to maxima of vector-valued martingales because for $r\in\R$ and a random vector $\xi\in \R^d$, $\{\xi\in A_{r\boldsymbol{i}}\}=\{\max_{1\le j\le d}\xi_j\le r\}$, where $\boldsymbol{i}$ is a vector of ones, and therefore, letting $M(\xi)\eqdef \max_{1\le j\le d}\xi_j$,
\[
	\DK{M(S),M(T)\mid \F_0}\le \DK{S,T\mid \F_0} \qtext{a.s.}
\]
\end{remark*}

\bibliographystyle{elsart-harv}
\bibliography{berry_esseen_mds}

\end{document}